\newtheorem{theorem}{Theorem}
\newtheorem{proposition}{Proposition}
\newcommand\inv[1]{#1\raisebox{1.15ex}{$\scriptscriptstyle-\!1$}}
\newtheorem{remark}{Remark}
\title{DSPG: Decentralized Simultaneous Perturbations Gradient Descent Scheme
}
\author{Arunselvan Ramaswamy% <-this % stops a space
\thanks{This work was supported by the German Research Foundation (DFG)
(project number 315248657)}% <-this % stops a space
\thanks{Department of Electrical Engineering and Computer Science, Paderborn University, Paderborn - 33102, Germany
        {\tt\small arunr@mail.upb.de}}
}
\begin{document}
\maketitle
\begin{abstract}
Distributed descent-based methods are an essential toolset to solving optimization problems in multi-agent system scenarios. Here the agents seek to optimize a global objective function through mutual cooperation. Oftentimes, cooperation is achieved over a wireless communication network that is prone to delays and errors. There are many scenarios wherein the objective function is either non-differentiable or merely observable. In this paper, we present a cross-entropy based distributed stochastic approximation algorithm (SA) that finds a minimum of the objective, using only samples. We call this algorithm \textit{Decentralized Simultaneous Perturbation Stochastic Gradient, with Constant Sensitivity Parameters} (DSPG). This algorithm is a two fold improvement over the classic Simultaneous Perturbation Stochastic Approximations (SPSA) algorithm. Specifically, DSPG allows for (i) the use of old information from other agents and (ii) easy implementation through the use simple hyper-parameter choices. We analyze the biases and variances that arise due to these two allowances. We show that the biases due to communication delays can be countered by a careful choice of algorithm hyper-parameters. The variance of the gradient estimator and its effect on the rate of convergence is studied. We present numerical results supporting our theory. Finally, we discuss an application to the stochastic consensus problem.

%In this paper, we present an asynchronous approximate gradient method that is easy to implement called DSPG (). It is obtained by modifying SPSA () to allow for decentralized optimization in multi-agent learning and distributed control scenarios. SPSA is a popular approximate gradient method developed by Spall, that is used in Robotics and Learning. In the multi-agent learning setup considered herein, the agents are assumed to be asynchronous (agents abide by their local clocks) and communicate via a wireless medium, that is prone to losses and delays. We analyze the gradient estimation bias that arises from setting the sensitivity parameters to a single value, and the bias that arises from communication losses and delays. Specifically, we show that these biases can be countered through better and frequent communication and/or by choosing a small fixed value for the sensitivity parameters. We also 
% Finally, it is worth noting that
% the stochastic approximation algorithms lens facilitates the analysis presented herein. 
\end{abstract}
%%%%%%%%%%%%%%%%%%%%%%%%%%%%%%%%%%%%%%%%%%%%%%%%%
%%%%%%%%%%%%%%%%%%%%%%%%%%%%%%%%%%%%%%%%%%%%%%%%%
%%%%%%%%%%%%%%%%%%%%%%%%%%%%%%%%%%%%%%%%%%%%%%%%%
\section{INTRODUCTION}\label{sec_intro}

Multi-agent systems (MAS) such as traffic networks, smart grids and robotic swarms are distributed systems with multiple interacting agents.  These agents compete or cooperate with each other to achieve a global objective. This objective is usually achieved by solving interrelated local optimization problems using both local and global/outside information. A wireless network, with possibly time varying reliabilities and topologies, is typically used for information exchange. In this paper, we present a cross-entropy based gradient method, DSPG, that is robust to changing network topologies, unbounded stochastic delays and errors. This algorithm does not require gradient information, instead it uses random samples of the objective, and possibly old information from other agents to calculate descent directions at every stage. Further, it does not require any synchronization between the agent-clocks.
%This problem is solved using solutions of multiple interrelated local optimization problems.  These local optimization problems are in turn solved by the agents, independently or through cooperation. Examples of MAS include distributed traffic light control, robotic swarming and distributed smart grids. The machine learning algorithms used to solve the above mentioned optimization problems are called Multi-agent learning algorithms (MAL) \cite{bucsoniu2010multi}. Such algorithms usually contain gradient descent/ ascent steps within them. 

Our algorithm, DSPG, has wide-ranging applications including multi-agent learning (MAL), stochastic consensus and decentralized stochastic optimization. In the past, distributed optimization was tackled within the setting of optimal control to solve the stochastic consensus problem. Recall that, the goal in a consensus problem is to find a common "control action" that minimizes some "network cost". Stochastic consensus with unbounded gradient delays was tackled in \cite{sirb2018decentralized}, and a subgradient method for the consensus problem was presented in \cite{nedic2009distributed}. The reader is referred to \cite{nedic2018distributed} for a thorough literature survey of distributed optimization algorithms for control problems. In this paper, we shall illustrate that DSPG can be used to solve the stochastic consensus problem with unbounded stochastic delays and unknown gradients. In other words, we endeavour to combine the results of \cite{sirb2018decentralized} and \cite{nedic2009distributed}. In what follows, we introduce Simultaneous Perturbations Stochastic Approximations (SPSA), \cite{spall1992multivariate}, a popular cross-entropy based descent algorithm which forms a basis for DSPG.

%In a MAL, gradient steps are locally calculated by all agents. However, these calculations utilize information (state, single-stage-costs, etc.) from other agents. In this paper, we consider the scenario wherein the agents communicate via an unreliable communication medium. The delay random variables here have an unbounded support. Additionally, all the agents involved abide by their local clocks thereby reducing the sychronization requirements. In other words, agent-$i$ and agent-$j$ may not have performed the same number of gradient steps.

%Many distributed control problems involving coupled dynamics can be solved by formulating them as multi-agent learning problems. The objective of the MAL here, is to find a (distributed) control law that minimizes the total discounted/ average cost of all the control systems. As stated before, our framework allows for asynchronicity between the control systems.

\subsection{SPSA and Approximate Gradient Methods}
Generally speaking, the goal in optimization is to find $x^*$ that minimizes a given objective function $f: \mathbb{R}^d \to \mathbb{R}$ i.e., find $x^* = \min \limits_{x \in \mathbb{R}^d}\ f(x)$. In today's data-driven world, one often encounters optimization problems wherein the objection functions are either non-differentiable or their gradients are incomputable. For example. let us consider machine learning algorithms that use convolutional neural networks (CNNs) to parameterize given objective functions. Due to the presence of a max-pooling layer, analytic backpropagation is not possible. Here, approximate gradient methods such as SPSA play an important role, since they only require a few forward passes of the CNN. The reader is referred to \cite{kiefer1952stochastic}, \cite{spall1992multivariate} and \cite{ramaswamy2018analysis} for different approximate gradient algorithms.
%In typical optimization/ optimal control/ machine learning (ML) applications one tries to find a minimum value for a given objective function $f$, i.e., find $x^* = \min \limits_{x \in \mathbb{R}^d}\ f(x)$. The function $f$ is a loss function in the context of machine learning, while $f$ could be the total average cost in optimal control.

%Let us consider the ML algorithms that use convolutional neural networks (CNNs). Due to the presence of a max-pooling layer, analytic backpropagation is not possible. Similarly, there are many problems wherein the function values are only observable and the function gradients ($\nabla f(\cdotp)$) are unavailable. Gradient estimators such as the Kiefer-Wolfowitz estimator \cite{kiefer1952stochastic}, Simultaneous Perturbation Stochastic Approximations (SPSA) \cite{spall1992multivariate}, etc. are used to approximate the gradient values at each step. The reader is referred to Spall \cite{spall2005introduction} for a detailed exposition on this subject.

We focus on SPSA since our algorithm is related to it. Broadly speaking, SPSA estimates the minimum of an objective iteratively through a sequence of stochastic updates.
Specifically, the i$^{th}$ component of the mimimum-estimate, at time $n$, is updated as:

%SPSA is a popular approximate gradient method developed by Spall \cite{spall1992multivariate}. At every step, the SPSA algorithm performs a descent step using only function observations. Specifically, the $i^{th}$ component of the gradient estimator is the following:
\begin{equation} \label{intro_spsa_eq}
x_{n+1}(i) = x_n(i) - \gamma_n \left[ \frac{f(x_n + c_n \Delta_n) - f(x_n - c_n \Delta_n)}{2 c_n \Delta_n(i)} \right], \text{ where}
\end{equation}
(i) $x_n \in \mathbb{R}^d$ ($d \ge 1$) is the $n^{th}$ estimate of the minimum. \\
(ii) $\Delta_n = (\Delta_n(1), \ldots, \Delta_n(d))$ is a vector with independent symmetric Bernoulli random components, i.e., $\Delta_n (i)  = \pm 1$ w.p. $\nicefrac{1}{2}$, $1 \le i \le d$. \\
(iii) The sensitivity parameters $\{c_n\}_{n \ge 0}$ and the step-sizes $\{\gamma_n\}_{n \ge 0}$ (learning rates) are such that $c_n \downarrow 0$ and $\sum \limits_{n \ge 0} \frac{\gamma_n^2}{c_n ^2} < \infty$.

Condition (iii) is key to showing that the minimum-estimate-sequence generated by \eqref{intro_spsa_eq} converges to a minimum of $f$. However, (iii) affects the choice of the learning rates, thereby reducing the applicability of SPSA to practical problems. An important extension to SPSA that boosts its applicability is presented in \cite{ramaswamy2018analysis}. This extension is called SPSA-C and is obtained by allowing for constant sensitivity parameters, .i.e., $c_n = c$ for all $n \ge 0$. The reader is referred to \cite{prashanth2017adaptive} for yet another extension that reduces the number of samples needed.
 %This condition does not allow for all possible learning rates and may affect the convergence properties of SPSA, for the problem at hand. This restriction is relaxed in \cite{ramaswamy2018analysis}. Here, it is shown that SPSA is stable and converges to a close-to-optimal value when $c_n = c$, for $n \ge 0$. This algorithm is called SPSA-C and the optimality of the limiting point is a function of $c$. In the original algorithm by Spall, the $\Delta$ vector used for simultaneous perturbations is such that all its components are independent random variables that are symmetric around zero. A popular choice is the symmetric Bernoulli random variable, i.e., $\Delta_n(i) = \pm 1$ with probability $\nicefrac{1}{2}$. The original scheme has been extended in many directions, especially in terms of allowed distributions for $\Delta$. The reader is referred to \cite{prashanth2017adaptive} for details.
\subsection{Our Contributions}
{\it In this paper, we present DSPG, an approximate gradient algorithm for optimization problems that arise in asynchronous multi-agent settings}. It is obtained by extending SPSA-C \cite{ramaswamy2018analysis} to allow for distributed computations. As in \cite{ramaswamy2018analysis}, we let $c_n = c$ for all $n \ge 0$.
The i$^{th}$ agent makes an update similar to \eqref{intro_spsa_eq}.  For this update, it obtains $x_n(j)$, $j \neq i$, over a wireless network. As stated earlier, this network is prone to delays and errors. Hence agent-i may perform the n$^{th}$ update using $x_{m(j)}(j)$, where $m(j) < n$ and $j \neq i$. {\it In other words, agent-i performs updates using the last known information from other agents}.

%We analyze the errors that arise from keeping the sensitivity parameters fixed and from delayed and lossy communication. We analyze how they affect each other and illustrate the same via experiments. We show that the sensitivity parameters must be fixed to a very small value, if communication is expected to be infrequent. On the other hand, if the communication is expected to be frequent, then there is more freedom in choosing the sensitivity parameters $c_n$.

Recall that the agents are all fully asynchronous. The relative frequency of the various agent updates affects the limiting point. We control this via the use of Borkar's balanced step-size sequence \cite{borkar1998asynchronous}. We show that DSPG converges to a near optimal point, whose optimality is affected by the size of the sensitivity parameter ($c$) and the communication errors. We also briefly discuss a constant step-size version of DSPG. We discuss the relationship between step-sizes, sensitivity parameters and the communication frequency. To summarize, we show that (a) convergence of DSPG is unaffected by stochastic delays that are moment bounded, (b) the size of the constant sensitivity parameter affects the neighborhood of convergence and (c) the errors due to delays are asymptotically in the order of the step-sizes. In the following section, we shall formally setup the optimization problem at hand.
%   \begin{figure}[thpb]
   %   \centering
      %\includegraphics[scale=1.0]{figurefile}
      %\caption{Inductance of oscillation winding on amorphous
     %  magnetic core versus DC bias magnetic field}
      %\label{figurelabel}
   %\end{figure}
\section{THE PROBLEM SETUP}\label{sec_setup}
In this paper, we consider a $d$-agent system with autonomously operating agents that are capable of exchanging information via an unreliable wireless network. All the agents endeavour to minimize local objective functions that require both local and global information. In particular, agent-i tries to find $x(i)^*$ that minimizes $F_i : \mathbb{R}^n \to \mathbb{R}$, where $x(i)^* \in \mathbb{R}^{d_i}$ and $\sum \limits_{i=i}^d d_i = n$. Note that agent-i requires estimates, $x(j)$ for $j \neq i$, from other agents in order to calculate $F_i$. Also, note that  $F_i$ is only accessible to agent-i. The global objective is therefore summarized as:
\begin{equation} \label{opt_prob}
\text{Find }x^* = (x^*(1), \ldots, x^*(d)) \text{ such that } x^* = \underset{x \in \mathbb{R}^d}{\text{ argmin }} F_i(x) \ \forall i.
\end{equation}

For clarity in presentation, without loss of generality, we assume that $d_i = 1$ for all $1 \le i \le d$. It may be the case that some or all of the $F_i$s are equal. If we consider the special case when $F_i \equiv F$ for all $1 \le i \le d$, then the problem is to find a minimizer of $F$ in a distributed decentralized manner. As stated previously, the $F_i$s are merely observable and their gradients unavailable. {\it Moving forward, we assume that there is at least one simultaneous minimizer for all $F_i$s.} In the following section we present DSPG, our cross-entropy based distributed approximate gradient algorithm to solve \eqref{opt_prob}.
%Let us consider a $d$-agent system, where all the agents operate asynchronously. Agent-i is required to minimize $F_i: \mathbb{R}^n \to \mathbb{R}$. The function $F_i$ utilizes state information from other agents. In other words, to calculate $F_i(x(1), \ldots, x(d))$, agent-i needs to obtain $x(j)$ from agent-j $\forall j \neq i$. The $x(i)$s are communicated over a communication network that is prone to delays and losses. In other words, agent-i may have to estimate $F_i$ using potentially old information.

%Generally speaking, $x(i) \in \mathbb{R}^{d_i}$, $d_i \ge 1$, for $1 \le i \le d$, and $\sum \limits_{i=1}^d d_i = n$. However, for the sake of clarity, we present the results in the paper for $d_i = 1$ $\forall \ i$, and hence $\sum \limits_{i=1}^d d_i = d$.  Typically, $x(i)$ represents state information, although it could be any general information including single-stage-costs.

%The optimization goal is to simultaneously minimize all $F_i$s.  This goal is summarized by the following equation:
%\begin{equation} \label{opt_prob}
%\text{Find }x^* = (x^*(1), \ldots, x^*(d)) \text{ such that } x^* = \underset{x \in \mathbb{R}^d}{\text{ argmin}} F_i(x).
%\end{equation}
%
\section{DSPG: THE ALGORITHM}

As DSPG is based on SPSA-C \cite{ramaswamy2018analysis}, a centralized approximate gradient method, we start with a quick recollection of it. The i$^{th}$ component of the minimum-estimate is updated as:
\begin{equation}
\label{dsgp_spsac_eq}
x_{n+1}(i) = x_n(i) - \gamma_n \left[ \frac{F(x_n + c \Delta_n) - F(x_n - c \Delta_n)}{2 c \Delta_n(i)} \right].
\end{equation}
 The sensitivity parameter ‘c' is used to control the bias and variance of the estimation errors. Getting back to the problem at hand, to find $x^*$ that simultaneously minimizes $F_i$ for all $1 \le i \le d$ in a distributed manner, we propose the following update to the agent-i estimate at time $n$:
 \begin{equation}
\label{dspg_dspg_eq}
x_{n+1}(i) = x_n(i) - \gamma_n  \frac{F_i \left( \begin{matrix} x_{n - \tau_{1i}(n)}(1) + c \Delta_n^i(1) \\ \vdots \\ x_{n - \tau_{di}(n)}(d) + c \Delta_n^i(d) \end{matrix} \right) - F_i \left( \begin{matrix} x_{n - \tau_{1i}(n)}(1) - c \Delta_n^i(1) \\ \vdots \\ x_{n - \tau_{di}(n)}(d) - c \Delta_n^i(d) \end{matrix} \right)}{2 c \Delta_n^i (i)},
\end{equation}
where (i) $\Delta_n ^i$ is a random vector generated by agent-i, at time $n$, such that its components are independent symmetric Bernoulli random variables; (ii) $0 \le \tau_{ji}(n) \le n$ is a delay random variable denoting the age of the latest agent-j estimate that is available at agent-i. In other words, agent-i does not have access to $x_m(j)$ for $n - \tau_{ji}(n) < m \le n$. \textit{The reader may note that while \eqref{dspg_dspg_eq} suggest that all agents use a single sensitivity parameter $c$, this is not true. We use a common $c$ merely to reduce clutter.} To summarize, agent-i updates its local estimate of $x^*(i)$ using the gradient estimator given by:
\begin{equation} \label{dspg_est_eq}
\frac{F_i \left( \begin{matrix} x_{n - \tau_{1i}(n)}(1) + c \Delta_n^i(1) \\ \vdots \\ x_{n - \tau_{di}(n)}(d) + c \Delta_n^i(d) \end{matrix} \right) - F_i \left( \begin{matrix} x_{n - \tau_{1i}(n)}(1) - c \Delta_n^i(1) \\ \vdots \\ x_{n - \tau_{di}(n)}(d) - c \Delta_n^i(d) \end{matrix} \right)}{2 c \Delta_n^i (i)}, 
\end{equation}
which in turn utilizes the latest estimates, $x^*(j)$ for $j \neq i$, from other agents. The DSPG algorithm given by Algorithm $1$ is a natural consequence of the above discussion. In Section~\ref{sec_asmp}, we discuss the assumptions under which Algorithm 1 converges to a common minimum of $F_i$, $1 \le i \le d$.

\begin{algorithm}
\SetAlgoLined
{\bf Initialization:} Sensitivity parameters are all set to $c$.\\
 \For{every timestep (of the local-clock)}{
 \For{every agent-j ($\neq$ i)}{
  \eIf{new estimate $x_{new}(j)$ is received from agent-j}{$x_n(j) \leftarrow x_{new}(j)$}
  {$x_n(j) \leftarrow x_{n-1}(j)$}
 }
 Generate $\Delta_n = (\Delta_n^i (1), \ldots, \Delta_n^i (d))$ such that $\Delta_n^i (j)$s are independent symmetric Bernoulli.\\
 Perform a gradient descent step in the $i^{th}$ direction as follows:\\
 $x _{n+1}(i) \leftarrow x _n(i) - \gamma_n \left( \frac{F_i(x_n + c \Delta_n^i) - F_i(x_n - c \Delta_n^i )}{2 c \Delta_n^i (i)} \right)$
 }
 \caption{DSPG (as executed by agent-i)}
\end{algorithm}

Clearly \eqref{dspg_est_eq} is an approximation for $\nicefrac{\partial F_i(x_n)}{\partial x(i)}$. In other words, ideally, if all gradient computations were possible, and all communications instantaneous and perfect, then $\nicefrac{\partial F_i(x_n)}{\partial x(i)}$ would be used instead of \eqref{dspg_est_eq} in \eqref{dspg_dspg_eq}. Spall \cite{spall1992multivariate} showed that the biases associated with gradient errors asymptotically vanish, provided the sensitivity parameters also vanish. Since the sensitivity parameters are fixed in our case, we expect that our gradient errors are asymptotically biased. In the following section, we bound this bias and the associated variance. This is important, since bias affects how close Algorithm 1 gets to a minimum. Also, the variance affects the rate of convergence and smoothness.
\subsection{Bounding the Gradient-Error Bias and Variance}\label{sec_bias}
%In this section, we estimate the bias and variance, due to the use of constant sensitivity parameters, associated with agent-i. We begin by 
Let us expand  $F_i(x_n + c \Delta_n^i)$ and $F_i(x_n - c \Delta_n^i)$ around $x_n$, using Taylor's theorem for multiple variables \cite{folland2005higher}:
\begin{equation}
\label{taylor_series}
F_i(x_n \pm c \Delta_n^i) = F_i(x_n) \pm \sum \limits_{j=1}^d \frac{\partial F_i(x_n)}{\partial x(j)} c \Delta_n^i(j) + o\left(\lVert c \Delta_n^i \rVert_2 ^2 \right).
\end{equation}
Using the above Taylor series expansion, we replace the terms in the numerator of the gradient approximation, \eqref{dspg_est_eq}, in the last step of Algorithm $1$  to get:
%{\footnotesize
\begin{equation}
\begin{split}
\label{grad_taylor1}
 &\frac{F_i(x_n + c \Delta_n^i) - F_i(x_n - c \Delta_n^i)}{2c \Delta_n^i(i)} =\\  &\inv{\Delta_n^i(i)} \left(\sum \limits_{j=1, j \neq i}^d \frac{\partial F_i(x_n)}{\partial x(j)} \Delta_n^i (j) \right) + \frac{\partial F_i(x_n)}{\partial x(i)} +  o\left(\lVert c \Delta_n^i \rVert_2 ^2 \right).
 \end{split}
\end{equation} 
%}
%Recall that $\Delta_n(i)$s are independent symmetric Bernoulli random variables. In other words, 
Since $\Delta_n^i(i)$ is symmetric Bernoulli, $\inv{\Delta_n^i(i)} = \Delta_n^i(i)$ almost surely. Hence, $\inv{\Delta_n^i(i)}$ is also symmetric Bernoulli and independent of $\Delta_n^i(j)$ for $j \neq i$.
%$\inv{\Delta_n(i)}$ is symmetric Bernoulli and independent of $\Delta_n(j)$ for all $j \neq i$. 
Further, $\mathbb{E} \Delta_n^i(j) = 0$ and $\mathbb{E} \inv{\Delta_n^i(i)} = 0$ for all $i \neq j$. 
Hence, 
\[
\mathbb{E}\left[\inv{\Delta_n(i)}\right] \left(\sum \limits_{j=1, j \neq i}^d \frac{\partial F_i(x_n)}{\partial x(j)} \mathbb{E}\left[\Delta_n(j)\right] \right) = 0.
\]
Using this observation and taking expectations on both sides \eqref{grad_taylor1}, we get:
\begin{equation}
\label{grad_taylor2}
\mathbb{E}\left[ \frac{F_i(x_n + c \Delta_n^i) - F_i(x_n - c \Delta_n^i)}{2c \Delta_n^i(i)} \right] =  \frac{\partial F_i(x_n)}{\partial x(i)} +  o\left(c^2 d\right),
\end{equation} 
In other words, the biases are asymptotically smaller than $c^2 d$. This gives us a rough idea in choosing the sensitivity parameter. Clearly one also needs to factor in the dimension $d$ of the multi-agent system, when choosing $c$. 

Now, we are ready to bound the variance of the gradient estimator. Hence, we consider:
\begin{equation} \label{grad_var}
\mathbb{E} \left[\frac{F_i(x_n + c \Delta_n^i) - F_i(x_n - c \Delta_n^i)}{2c \Delta_n^i(i)} -  \frac{\partial F_i(x_n)}{\partial x(i)} \right]^2.
\end{equation}
Using arguments that are similar to the ones used to bound the bias, we get that \eqref{grad_var} equals the following:
\[
var\left[\inv{\Delta_n^i(i)}\right] \left(\sum \limits_{j=1, j \neq i}^d \frac{\partial F_i(x_n)}{\partial x(j)} ^2 var\left[\Delta_n^i(j)\right] \right)  + o(c^4 d^2).
\]
Since $var\left[\inv{\Delta_n^i(i)}\right] = var\left[\Delta_n^i(j)\right] = 2$, we get the following bound for the variance:
\begin{equation} \label{grad_est_var}
4\sum \limits_{j=1, j \neq i}^d \frac{\partial F_i(x_n)}{\partial x(j)} ^2 + o(c^4 d^2).
\end{equation}
Although the variance does not affect the limiting point, it does affect the rate of convergence. Further, one can expect aberrant transient behavior. \textit{This aberrance has been captured in the experiments conducted. The reader is referred to Figure \ref{fig2} in Section \ref{sec_numerical} for details.}

In this section, we presented the DSPG algorithm. We bounded the asymptotic bias and variance of the gradient errors as a function of the sensitivity parameter $c$. In the following section, we present a convergence analysis of Algorithm 1, and understand the interplay between the limiting point, asymptotic error bias and $c$. 
%Since the analysis is similar to \cite{ramaswamy2018asynchronous}, we only present the missing arguments.

%In traditional SPSA, the sensitivity parameters vanish over time. Hence, the gradient estimates are asymptotically unbiased. This is not so in our case. These asymptotic biases affect the optimality of the limiting point. We use analyses similar to \cite{ramaswamy2018analysis} to ``characterize'' the aforementioned optimality. Regarding variance in SPSA, they are controlled by the vanishing sensitivity parameters facilitating smooth convergence. In our case the variance can affect the rate of convergence.
\section{ANALYZING DSPG}\label{sec_convergence}
The tools and techniques associated with stochastic approximation algorithms (SAs) are key to the analysis of Algorithm 1. Broadly speaking, the field of SAs includes tools and techniques required to develope and analyze stochastic algorithms that have an iterative structure. The first SA was developed by Robbins and Monro \cite{robbins1951stochastic} to solve the root finding problem. In the mid 90s, important contributions were made by \cite{benaim1996dynamical}, \cite{benaim1996asymptotic} and \cite{borkar2000ode}. Recently, \cite{benaim2005stochastic} \cite{ramaswamy2016generalization} and others have extended the theory to allow set-valued objective functions. These extensions find applications in the analysis of deep reinforcement learning algorithms \cite{ramaswamy2018stability}.

It may be noted that the analysis presented herein is similar to the one presented in Section~4 of \cite{ramaswamy2018asynchronous}. To avoid redundency, we only present the new arguments, in addition to an overview of the proof. Before we proceed, let us recast Algorithm 1 and \eqref{dspg_dspg_eq} as the following SA:
%In this section, we use the lens of stochastic approximation algorithms (SAA) to analyze the long term behavior of Algorithm $1$. The root finding stochastic iterative algorithm developed by Robbins and Monro \cite{robbins1951stochastic} is generally regarded as the first SAA. Important contributions to the modern theory of SAA include \cite{benaim1996dynamical}, \cite{benaim1996asymptotic} and \cite{borkar2000ode}. This theory was generalized to accomodate set-valued mean-fields in more recent works, see for example  \cite{benaim2005stochastic} and \cite{ramaswamy2016generalization}. Before we proceed we rewrite DSPG as viewed through the lens of SAA:
\begin{multline}
\label{dspg_sa}
x_{n+1}(i) = x_n (i) - \gamma_{\nu(n, i)} I(i \in Y_n)  \hat{g} (x_{n - \tau_{1i}}(1), \ldots, x_{n - \tau_{di}}(d))(i), \Delta_n^i), \text{ where}
\end{multline}
(i) 
 $
\hat{g}(x)(i) = \frac{F_i(x + c \Delta_n^i) - F_i(x - c \Delta_n^i)}{2c \Delta_n^i(i)}
$
is the i$^{th}$ component of gradient estimator \eqref{dspg_est_eq}. \\
(ii) $\nu(n,i)$ is the number of times that agent-i is updated upto time $n$.\\
(iii) $Y_n \subset \{1, \ldots, d\}$ is the subset of agents active at time $n$.

All other terms are as before. Note that $\{Y_n\}_{n \ge 0}$ and $\{\nu(n,i)\}_{n \ge 0, \ 1 \le i \le d} $ are used to account for the complete lack of synchronization between the agents involved. It must be noted that we make certain causal assumptions on the relative update frequency of the various agents, see (A5) below. However, this assumption does not require any synchronization during implementation.
\subsection{Assumptions} \label{sec_asmp}
Before we list the assumptions that ensure convergence of Algorithm 1, we make a quick but important note on notation.\\
\vspace*{.005cm}\\
{\bf Quick note on notation:} {\it The following assumptions and analysis involves associating an o.d.e. with \eqref{dspg_sa}. To avoid confusion, note that we will use $x^d$ to represent the $d^{th}$ component of vector $x$, instead of the previously used $x(d)$. In this section, if we use $x(t)$, then the $t$ denotes time. For example, we use $\dot{x}(t) = f(x(t))$, $t \ge 0$, to denote an o.d.e., and $(x^1, \ldots, x^d)$ for vector $x$.}
\\
\vspace*{.005cm}\\
Below, we list the assumptions involved.
\begin{itemize}
\item[{\bf (A1)}] $F_i$ is Lipschitz continuous for all $1 \le i \le d$. Without loss of generality, $L$ is the Lipschitz constant associated with all of them.
\item[ \textbf{(A2)}] (i) $0 \le \gamma_n \le 1$ $\forall \ n$, (ii) $\gamma_n \le \kappa \ \gamma_m$ for $m \le n$ and fixed $\kappa > 0$, (iii) $\gamma_n \in o(n^{-\eta})$ where $\eta \in (1/2, 1]$, (iv) $\sum \limits_{n \ge 0} \gamma_n = \infty$, (v) $\sum \limits_{n \ge 0} \gamma_n ^2 < \infty$ and (vi) $\limsup \limits_{n \to \infty} \sup \limits_{y \in [x, 1]} {\Huge \frac{\gamma_{[yn]}}{\gamma_n} }< \infty$ for $0 < x \le 1$.
\item[{\bf (A3)}] $\sup \limits_{n \ge 0} \lVert x_n \rVert < \infty$ a.s.
\item[{\bf (A4)}] $x^*$ is the unique global asymptotic stable equilibrium point of $\dot{x}(t) = \begin{bmatrix}
\nicefrac{\partial F_1 (x(t))}{\partial x^1} \\
\vdots \\
\nicefrac{\partial F_d (x(t))}{\partial x^d}
\end{bmatrix},
$
where $x^* = \underset{x \in \mathbb{R}^d}{\text{argmin}} \left\{ \max \limits_{1 \le i \le d} F_i(x) \right\}$.
\item[{\bf (A5)}] (i) $\liminf \limits_{n \to \infty} \frac{\nu(n,i)}{n} > 0$ for all $1 \le i \le d$ and (ii) there is a random variable $\overline{\tau}$ that stochastically dominates $ \tau_{ij}(n)$ for all $1 \le i,j \le d$ and $n \ge 0$, such that $\mathbb{E}  \tau_{ij}(n) ^2 < \infty$.
\end{itemize}
 
%The analysis in this section is under the realistic assumption that these delays are bounded. It may however be noted that unbounded delays can be very easily incorporated using arguments from \cite{borkar1998asynchronous}. We assume that $F_i$ is Lipschitz continuous for all $1 \le i \le d$ and show that the gradient estimator $\hat{g}$ is Lipschitz continuous. Without loss of generality let $L$ be the Lipschitz constant associated with all $F_i$s. Then,
Assumption (A4) essentially ensures that the global optimization problem \eqref{opt_prob} has a solution. It may be noted that the uniqueness assumption can be readily relaxed. Then, one cannot ensure convergence to a particular minimizer. The first part of assumption (A5) states that the number of updates per agent are in the ``order of $n$''. The second part states that the delay random variables while having unbounded support are moment bounded. The Lipschitz continuity of the gradient estimator $\hat{g}$ is a direct consequence of (A1). This claim is a direct consequence of the following two inequalities:
\[
\left| \hat{g}(x)(i) - \hat{g}(y)(i) \right| \le \frac{L}{c} \lVert x - y \rVert, \text{ and}
\]
\[
\lVert \hat{g}(x) - \hat{g}(y) \rVert \le \frac{\sqrt{d} L}{c} \lVert x - y \rVert.
\]

Before stating the main convergence theorem, we present the following technical lemma. It states that the errors due to communication delays are asymptotically in the order of the step-size. Since we use diminishing step-sizes, they vanish over time. {\it An important implication of this lemma is the following: suppose one were to use constant step-size/ learning rate in DSPG, then these errors do not vanish over time.}

\begin{proposition}
If DSPG is bounded almost surely, then the errors due to bounded communication delays vanish asymptotically.
\end{proposition}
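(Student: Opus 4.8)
The plan is to show that the discrepancy between the delayed estimate $x_{n-\tau_{ji}(n)}(j)$ that agent-$i$ actually uses and the current estimate $x_n(j)$ is, asymptotically, of the order of the step-size $\gamma_n$. Once this is in place, the Lipschitz continuity of the gradient estimator noted in Section~\ref{sec_asmp} (with constant $\tfrac{\sqrt d L}{c}$) transfers the bound from iterate-discrepancies to update-directions, so that the error introduced into \eqref{dspg_sa} by using stale information vanishes as $n \to \infty$. Concretely, I would define the delay error as the difference between the actual update direction $\hat g(x_{n-\tau_{1i}}(1),\ldots,x_{n-\tau_{di}}(d))(i)$ and the idealized undelayed direction $\hat g(x_n(1),\ldots,x_n(d))(i)$, and reduce the whole statement to the coordinatewise bound $|x_n(j) - x_{n-\tau_{ji}(n)}(j)| = O(\gamma_n)$.

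First I would control the one-step increment. By (A3) the trajectory lies in a (random) bounded set almost surely, and since each $F_i$ is Lipschitz by (A1), the estimator $\hat g$ is bounded on this set by some finite $K = K(\omega)$. Reading off \eqref{dspg_sa}, the increment of coordinate $j$ at global step $m$ satisfies $|x_{m+1}(j) - x_m(j)| \le \gamma_{\nu(m,j)} I(j \in Y_m) K$, so only the steps at which agent-$j$ actually updates contribute. Telescoping over the delay window then gives
\[
\left| x_n(j) - x_{n-\tau_{ji}(n)}(j) \right| \;\le\; K \sum_{m = n - \tau_{ji}(n)}^{n-1} \gamma_{\nu(m,j)} I(j \in Y_m).
\]

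Next, under the bounded-delay hypothesis of the proposition, I would write $\tau_{ji}(n) \le \bar\tau$ for a deterministic $\bar\tau$. Because agent-$j$ updates at most once per global step, the active indices $\nu(m,j)$ appearing in the sum all lie in $\{\nu(n,j)-\bar\tau+1,\ldots,\nu(n,j)\}$, so the right-hand side is at most $K\sum_{k=\nu(n,j)-\bar\tau+1}^{\nu(n,j)}\gamma_k$. The crux is then to bound each such $\gamma_k$ by a fixed multiple of $\gamma_{\nu(n,j)}$: this is precisely what the step-size regularity condition (A2)(vi) supplies. Writing $N=\nu(n,j)$ and $k=[yN]$ with $y=k/N$, every index in the window has $y\in[(N-\bar\tau)/N,1]\subset[x,1]$ for any fixed $x<1$ once $N$ is large, whence $\limsup_N \sup_{y\in[x,1]}\gamma_{[yN]}/\gamma_N<\infty$ yields $\gamma_k \le C\gamma_N$. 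Since there are at most $\bar\tau$ terms, the telescoped sum is bounded by $C\bar\tau K \gamma_{\nu(n,j)}$, i.e.\ the delay error is $O(\gamma_{\nu(n,j)})$, which tends to $0$ because $\gamma_n\downarrow 0$ by (A2)(iii)--(v).

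Finally I would conclude by feeding this into the Lipschitz estimate for $\hat g$: the difference between the delayed and undelayed directions in \eqref{dspg_sa} is at most $\tfrac{\sqrt d L}{c}$ times the iterate discrepancy, hence also $O(\gamma_n)$, so the delay contributes a perturbation of order $\gamma_n^2$ to each update and vanishes asymptotically. The main obstacle I anticipate is the asynchronous indexing: the step-size felt by agent-$j$ is driven by its private counter $\nu(\cdot,j)$ rather than the global clock, so one must invoke (A5)(i), namely $\liminf_n \nu(n,j)/n>0$, both to guarantee $\nu(n,j)\to\infty$ and to convert a bounded window in global time into a bounded window of $\nu$-indices before (A2)(vi) can be applied. (Relaxing to the merely moment-bounded delays of (A5)(ii) would additionally require a Borel--Cantelli estimate on $\bar\tau$, but that falls outside the present bounded-delay statement.)
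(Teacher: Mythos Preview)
Your argument is correct for the case you treat, but the route differs from the paper's in one essential respect: which step-size hypothesis drives the estimate. You invoke (A2)(vi) to compare $\gamma_k$ with $\gamma_{\nu(n,j)}$ over a window of fixed length $\bar\tau$, obtaining the sharp order $O(\gamma_{\nu(n,j)})$ stated in the text before the proposition. The paper instead appeals to (A2)(iii), namely $\gamma_n\in o(n^{-\eta})$, together with the moment bound in (A5)(ii): a Chebyshev/Borel--Cantelli step gives $P(\tau_{ji}(n)>n^\eta\ \text{i.o.})=0$, so the telescoped sum contains at most $n^\eta$ terms each of size $o(n^{-\eta})$, hence is $o(1)$. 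Thus the paper already covers the unbounded, merely moment-bounded delays of (A5)(ii), whereas you take ``bounded'' literally as $\tau_{ji}(n)\le\bar\tau$ and defer the stochastic case to a parenthetical remark. In exchange, your argument yields the more precise $O(\gamma_n)$ rate (which is what the paper advertises but does not actually extract from its own proof), and it exercises (A2)(vi), an assumption the paper lists but never otherwise uses. Both proofs share the same skeleton---Lipschitz reduction to iterate discrepancy, telescoping, step-size control---so neither is wrong; they simply lean on different clauses of (A2) and handle different delay regimes.
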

\begin{proof}
We begin the proof with a quick recollection of our note on notation. Given a vector $x$, its i$^{th}$ component is denoted using $x^i$, instead of $x(d)$. We are required to show that $\left| \hat{g}(x_n)^i - \hat{g} (x_{n - \tau_{1i}(n)}^1, \ldots, x_{n - \tau_{di}(n)}^d)^i) \right| \to 0$ as $n \to \infty$. It follows from the Lipschitz continuity of $\hat{g}$ that:
\begin{multline}
\left| \hat{g}(x_n)^i - \hat{g} (x_{n - \tau_{1i}(n)}^1, \ldots, x_{n - \tau_{di}(n)}^d)^i) \right| \le \frac{L}{c} \lVert x_n - (x_{n - \tau_{1i}(n)}^1, \ldots, x_{n - \tau_{di}(n)} ^d) \rVert.
\end{multline}
Now, we focus on bounding $\left| x_n^j - x_{n - \tau_{ji} ^d}^j \right|$ for $1 \le j \le d$. It follows from \eqref{dspg_sa}, the Lipschitz continuity of $\hat{g}$, and the almost sure boundedness of the iterates ($\sup \limits_{n \ge 0} \lVert x_n \rVert < \infty$ a.s.) that
\[
\sum \limits_{m = n - \tau_{ji}(n)} ^{n-1} |x_{k+1}^j - x_k^j | \le
\frac{L}{2c} \sum \limits_{m = n - \tau_{ji}(n)} ^{n-1} \gamma_k C,
\]
for some fixed $0 \le C < \infty$. It follows from (A5) that $P(\tau_{ij}(n) > n^{\eta} i.o.) = 0$ for $1 \le i,j \le d$. Since $\gamma_n \in o(n^{- \eta})$, the RHS of the above inequality is in $o(1)$. Hence, $\left| \hat{g}(x_n)^i - \hat{g} (x_{n - \tau_{1i}(n)}^i, \ldots, x_{n - \tau_{di}(n)}^d)^i \right| \to 0$ as $n \to \infty$, as required.

%Since $\tau_{ji}$ is bounded (possibly by sample path dependent constant) and $\sum \limits_{n \ge 0} \gamma_n ^2 < \infty$, $\sum \limits_{m = n - \tau_{ji}} ^{n-1} \gamma_k \to 0$ as $n \to \infty$. In other words, $\left| \hat{g}(x_n)(i) - \hat{g} (x_{n - \tau_{1i}}(1), \ldots, x_{n - \tau_{di}}(d))(i)) \right| \to 0$ as $n \to \infty$. 
\end{proof}
\subsection{Convergence Theorem} \label{sec_conv_thm}
We are now ready to state the main theorem of this paper. As stated earlier, we only provide an overview of the proof. The reader is referred to Section~4 of \cite{ramaswamy2018asynchronous} for details.
\begin{theorem}\label{main_theorem}
Under assumptions $(A1)-(A5)$, DSPG converges to a small  neighborhood of $x^*$. Further, this neighborhood depends on $c$, i.e., the neighborhood size grows as $c \uparrow \infty$.
\end{theorem}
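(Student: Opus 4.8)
The plan is to analyze the recursion \eqref{dspg_sa} by the o.d.e.\ method for \emph{asynchronous} stochastic approximation, following Borkar \cite{borkar1998asynchronous} in the form adapted in \cite{ramaswamy2018asynchronous}. First I would isolate the mean field by adding and subtracting the true partial derivative, writing the driving term of the $i^{th}$ update as a sum of four pieces: the mean field $-\nicefrac{\partial F_i(x_n)}{\partial x^i}$; a bias $\beta_n^i := \mathbb{E}\big[\hat{g}(x_n)^i \mid \mathcal{F}_n\big] - \nicefrac{\partial F_i(x_n)}{\partial x^i}$, which by \eqref{grad_taylor2} is of order $o(c^2 d)$; a martingale difference $M_{n+1}^i := \hat{g}(x_n)^i - \mathbb{E}\big[\hat{g}(x_n)^i \mid \mathcal{F}_n\big]$; and a delay error $\delta_n^i$ that the preceding Proposition has already shown to be $o(1)$. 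Denoting by $h(x) = \big(\nicefrac{\partial F_1(x)}{\partial x^1}, \ldots, \nicefrac{\partial F_d(x)}{\partial x^d}\big)$ the vector field of (A4), the goal is to show that the interpolated iterates track the flow of the perturbed descent o.d.e.\ $\dot{x} = -h(x) - \beta$.

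The second step handles the complete lack of synchronization. Using (A5)(i), which forces every agent to be updated in the ``order of $n$'', together with the balanced step-size condition (A2)(vi) and the comparability condition (A2)(ii), the standard argument of \cite{borkar1998asynchronous} lets one interpolate the differently-clocked components onto a common continuous time axis on which the asynchronous scheme and its synchronous counterpart share the same limiting o.d.e., up to a state-dependent time rescaling by strictly positive, bounded factors. This rescaling does not alter the limit set, so it suffices to treat \eqref{dspg_sa} as a single-timescale recursion driven by $-h - \beta$ plus noise.

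The third step disposes of the vanishing perturbations. Because the $\Delta_n^i$ are bounded and the iterates are a.s.\ bounded by (A3), the conditional variance of $\hat{g}$ is uniformly bounded by \eqref{grad_est_var}; combined with $\sum_n \gamma_n^2 < \infty$ from (A2)(v), the martingale $\sum_k \gamma_{\nu(k,i)} M_{k+1}^i$ has summable quadratic variation and hence converges a.s., so its tail contribution to the interpolated trajectory is negligible. The delay term $\delta_n^i$ vanishes by the Proposition. What survives is the bias $\beta$, which is \emph{not} vanishing but is uniformly of size $O(c^2)$ on the compact set containing the iterates. By the theory of asymptotic pseudotrajectories \cite{benaim1996asymptotic,benaim2005stochastic}, the interpolated iterates therefore form an asymptotic pseudotrajectory of the flow of $\dot{x} = -h(x) - \beta$.

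The final step, which I expect to be the main obstacle, is to convert this persistent $O(c^2)$ perturbation into an explicit, monotone $c$-dependent neighborhood of $x^*$. Since (A4) makes $x^*$ the unique globally asymptotically stable equilibrium of $\dot{x} = -h(x)$, the converse Lyapunov theorem supplies a smooth $V \ge 0$, vanishing only at $x^*$, with $\langle \nabla V(x), -h(x) \rangle < 0$ for $x \neq x^*$. Along the perturbed flow one has $\langle \nabla V(x), -h(x) - \beta \rangle \le \langle \nabla V(x), -h(x)\rangle + \lVert \nabla V(x) \rVert \, \lVert \beta \rVert$, which stays strictly negative outside a sublevel set $\{V \le \rho(\lVert\beta\rVert)\}$ whose threshold $\rho$ is an increasing function of $\lVert \beta \rVert = O(c^2)$. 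A Hirsch-type argument then confines the pseudotrajectory, and hence the iterates, to this sublevel set a.s.; since $V$ vanishes only at $x^*$, this is a neighborhood of $x^*$ that enlarges as $c \uparrow \infty$. The delicate point is establishing the monotone dependence of $\rho$ on $\lVert\beta\rVert$ together with uniform bounds on $\lVert \nabla V \rVert$ over the region swept by the iterates; both follow from (A3) and the smoothness of $V$ guaranteed by the converse Lyapunov theorem.
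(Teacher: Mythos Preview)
Your proposal is correct and follows the same overarching strategy as the paper: cast DSPG as an asynchronous stochastic approximation, use Borkar's balanced step-size machinery together with (A5)(i) to reduce to a common time axis, invoke the Proposition to kill the delay errors, and then argue that the iterates track an $O(c^2)$ perturbation of the gradient o.d.e.\ in (A4). The paper's proof is terser---it absorbs your four-way decomposition (mean field $+$ bias $+$ martingale $+$ delay) into a blanket citation of Section~4 of \cite{ramaswamy2018asynchronous} and rewrites the asynchrony through the diagonal matrix $D_n$, then uses Theorem~3.2 of \cite{borkar1998asynchronous} to conclude $\Lambda(t)\equiv \tfrac{1}{d}I$---whereas you spell out the martingale control via $\sum_n \gamma_n^2 < \infty$ and treat the time rescaling more abstractly. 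These are presentational rather than substantive differences.

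The one genuine divergence is the final step. The paper obtains the $c$-dependent neighborhood by invoking the \emph{upper semicontinuity of attractor sets} from \cite{aubin2012differential}: since $\hat{g}$ is an $o(c^2 d)$ perturbation of the vector field in (A4), the global attractor of $\dot{x}=\hat{g}$ sits in a neighborhood of $x^*$ whose diameter scales with $c$. You instead build a smooth Lyapunov function via the converse Lyapunov theorem and run a Hirsch-type sublevel-set trapping argument. Both are valid; your route is more explicit about the monotone dependence $\rho(\lVert\beta\rVert)$ and more self-contained, while the paper's is a one-line appeal to set-valued stability theory. Either delivers the same conclusion.
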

\begin{proof}
First, let us rewrite DSPG/\eqref{dspg_sa} as:
\begin{equation} \label{dspg_eq}
x_{n+1} = x_n - \overline{\gamma}_n D_n \hat{g}(x_n), \text{ where}
\end{equation}
(i) $\overline{\gamma}_n = \max \limits_{1 \le i \le d} \gamma{\nu(n, i)} I (i \in Y_n)$ and\\ (ii) $D_n = \begin{bmatrix}
\frac{\gamma_{\nu(n, 1)} I (1 \in Y_n)}{\overline{\gamma}_n} & \dots & 0 \\
\vdots & \ddots & \vdots \\
0 & \dots & \frac{\gamma_{\nu(n, d)} I (d \in Y_n)}{\overline{\gamma}_n}
\end{bmatrix}.
$

Using arguments similar to those in Section $4$ of \cite{ramaswamy2018asynchronous}, we can show that \eqref{dspg_eq} tracks a solution to the ordinary differential equation (o.d.e.) given by $\dot{x}(t) = \Lambda(t) \hat{g}(t)$, where $\Lambda(t)$ is a diagonal matrix with entries in $[0,1]$, $t \ge 0$. Hence, the asymptotic behaviors of \eqref{dspg_eq} and $\dot{x}(t) = \Lambda(t) \hat{g}(t)$ are identical.

%Let us suppose that the step-size sequence is balanced, see \cite{borkar1998asynchronous}, i.e., $\{\gamma_{\nu(n,i)}\}_{n \ge 0, 1 \le i \le d}$ is such that there exists $\gamma_{ij}$  with 
%\[
%\lim \limits_{n \to \infty} \frac{\sum \limits_{m=0}^\nu(n,i) \gamma_{m}}{\sum \limits_{m=0}^\nu(n,j) \gamma_{m} = \gamma_{ij},
%\]
%for every $1 \le i,j \le d$. {\it If we assume that all $d$ agents use the same step-size sequence, then it is balanced since the communication delays are bounded}.
Since all the agents utilize the same step-size sequence, it follows from Theorem $3.2$ of \cite{borkar1998asynchronous} that \eqref{dspg_eq} (Algorithm $1$, DSPG) tracks a solution to 
\begin{equation} \label{dspg_b1}
\dot{x}(t) = 
\begin{bmatrix}
1/d & \dots & 0 \\
\vdots & \ddots & \vdots \\
0 & \dots & 1/d 
\end{bmatrix} \hat{g}(t).\end{equation}
In other words, $\Lambda(t) = \begin{bmatrix}
1/d & \dots & 0 \\
\vdots & \ddots & \vdots \\
0 & \dots & 1/d 
\end{bmatrix}$ for all $t \ge 0$. Further, the asymptotic behavior of \eqref{dspg_b1} and $\dot{x}(t) = \hat{g}(t)$ are identical since their driving vector fields are $1/d$ factor apart in each co-ordinate, see \cite{aubin2012differential}. Finally, any solution to $\dot{x}(t) = \hat{g}(t)$ can be viewed as a perturbation of some solution to 
\begin{equation} \label{dspg_track}
\dot{x}(t) = \begin{bmatrix}
\nicefrac{\partial F_1 (x(t))}{\partial x^1} \\
\vdots \\
\nicefrac{\partial F_d (x(t))}{\partial x^d}
\end{bmatrix}. \end{equation} This is because $\hat{g}(x_n)$ is an approximation for $\begin{bmatrix}
\nicefrac{\partial F_1 (x_n)}{\partial x^1} \\
\vdots \\
\nicefrac{\partial F_d (x_n)}{\partial x^d}
\end{bmatrix}$ with an approximation error of $o(c^2 d)$. 

Recall from (A4) that $x^*$ is the global asymptotic stable equilibrium of \eqref{dspg_track}. It follows from the upper semicontinuity of attractor sets that $\dot{x}(t) = \hat{g}(t)$ also has a globally asymptotic stable equilibrium set in a neighborhood of $x^*$, see \cite{aubin2012differential}. Further, the diameter of this neighborhood is directly proportional to $c$ (sensitivity parameter).

To summarize, DSPG converges to a small neighborhood of $x^*$, provided $c$ is small.\end{proof}
%%%%%%%%%%%%%%%%%%%%%%%%%%%%%%%%%%%%%%%%%%%%%%%%%%%%%%%%%%%%%%%%%%%%%%%%%%%%%%%%
\section{NUMERICAL RESULTS} \label{sec_numerical}
Algorithm 1 (DSPG) can be used to estimate $x^* = (x(1)^*, \ldots, x(d)^*)$ in an interative manner such that $x^*$ minimizes $F_i$ for all $1 \le i \le d$. Since only agent-i has access to $F_i$, it updates its current estimate of $x(i)^*$ using estimates from other agents. Specifically, it performs the following update (last step of Algorithm 1) at time $n$: 
\begin{equation} \label{nr_eq}
x_{n+1}(i) = x_n(i) - \gamma_n \hat{g}(x_n)(i),
\end{equation}
where $\hat{g}(x_n)(i)$ is given by \eqref{dspg_est_eq}.
%Recall that the aim of DSPG is to find a simultaneous minimizer of all objective functions, i.e., find $x^* = \underset{x \in \mathbb{R^d}}{\text{argmin} } \max \limits_{1 \le i \le d} F_i(x)$. Agent-i performs approximate gradient descent in the $i^{th}$ direction, using local state information and delayed global state information. For our experiments we chose $F_i$s from the class of quadratic functions such that there is at least one $x^*$ that minimizes all $F_i$s simultaneously. Further, without loss of generality we let this $x^*$ be the origin. Finally, we conducted experiments on $4$ and $10$ agent systems.
\subsection{Experiment Parameters}
We let $F_i := x^{T}A_i x$ such that $A_i$ is a randomly chosen $d \times d$ positive definite matrix, where $1 \le i \le d$. Then, $\hat{g}(x_n)(i)$ in \eqref{nr_eq} is given by:
\begin{multline*}
\hat{g}(x_n)(i) := \frac{(x_n + c \Delta_n^i)^{T}A_i (x_n + c \Delta_n^i) - (x_n - c \Delta_n^i)A_i(x_n - c \Delta_n^i)}{c \Delta_n^i(i)}. 
\end{multline*}
Clearly, the origin or the 0-vector is the required minimizer. We present experimental results for sensitivity parameters varied between $0.1$ and $1$, in steps of $0.1$. Note that , all agents use the same sensitivity parameter $c$. We repeat the experiments for $4$ and $10$ agent systems, and for various communication configurations.

Agent-i obtains estimates of $x(j)^*$, $j \neq i$, from agent-j over a wireless channel, modelled as an erasure channel with dropouts according to an i.i.d. Bernoulli random process. Every pair of agents is connected by two unidirectional erasure channels. This can be readily simulated by associating communication vectors $\psi_i(n) = (\psi_{1i}(n), \ldots, \psi_{di}(n))$, with agent-i for all $1 \le i \le d$, at time $n$. If agent-i successfully receives $x_n(j)$ from agent-j, then $\psi_{ji}(n) = 1$, else it is $0$. Further, $P(\psi_{ji}(n) = 1) = p_c$ and $P(\psi_{ji}(n) = 0) = 1- p_c$, for some fixed $0 < p_c \le 1$. We use the same success probability $p_c$ for all channels and present empirical results for $p_c$ between $0.2$ and $0.9$.

%At the beginning of each iteration agent-i generates a $d$-dimensional communication vector $\psi_i(n) = (\psi_{1i}(n), \ldots, \psi_{di}(n))$, randomly. The components of $\psi_i(n)$ are mutually independent Bernoulli random variables. Further, $\psi_{ji}(n) =1 $ with probability $p_c$ and $\psi_{ji}(n) =0$ with probability $1 - p_c$. For $j \neq i$, agent-i obtains the latest state information from agent-j if $\psi_{ji}(n) =1 $, else it uses the old information. Note that $\psi_i(n)$ is used to account for the stochastic communication channels and resource constraints. The numerical results presented herein consider the errors due to lost communication but not the errors due to delays. Also note that the ``communication probability'' $p_c$ is the same for every communicating pair. This is done to obtain intelligible plots. The value of $c$ is varied between $0.1$ to $10$ in steps of $0.1$. The value of the communication probability is varied from $0.2$ to $0.9$ in steps of $0.05$.

Algorithm $1$ is run for $20000$ iterations. For the first 5000 iterations, a constant step-size of $0.001$ is used. Thereafter we use diminishing step-sizes, starting with \nicefrac{1}{100} and vanishing at the rate of $\nicefrac{1}{n}$. {\it It must be noted that optimal step-size sequence that hastens convergence is highly problem dependent}.
\subsection{Exp. 1: $4$-agent system}
 \begin{figure}[H]
      \centering
      \includegraphics[width=.5\textwidth]{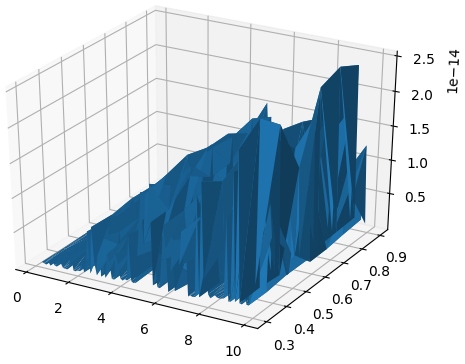}
      \caption{4-agent summary:  $0.1 \le c \le 10$ is plotted along $x$-axis, $0.3 \le p_c \le 0.9$ is plotted along $y$-axis and $\lVert (x_{20000}(1), \ldots, x_{20000}(4)) \rVert$ is plotted along  $z$-axis. Each point in the plot is obtained as an average of $20$ independent experiments.}
      \label{fig1}
   \end{figure}
   Figure \ref{fig1} illustrates the performance of DSPG in a $4$-agent setting for different values of the sensitivity parameter $c$ and the success probability $p_c$. The $c$ and $p_c$ values are plotted along the $x$ and $y$ axes, respectively. The Euclidean distance of the limiting point (of Algorithm $1$) from the origin, i.e., $\lVert (x_{20000}(1), \ldots, x_{20000}(d)) \rVert$, is plotted along the $z$-axis. Each point on this plot is obtained by taking the average of $20$ independent experiments. It can be seen that Algorithm $1$ converges to a value close to the origin for {\bf smaller values of $c$} and {\bf larger values of $p_c$}. It converges farther away from the origin for {\bf larger values of $c$} and {\bf smaller values of $p_c$}. The sensitivity parameter $c$ seems to have a greater influence on the convergence-neighborhood than the success probability $p_c$. Finally, the surface of the plot is rough due to high variance of the gradient-estimation errors, see Section \ref{sec_bias}.
   
Figure \ref{fig3} illustrates the performance of DSPG in a $4$-agent setting with the sensitivity parameter $c$ fixed at $0.1$. The success probability $p_c$, plotted along $x$-axis, is varied between $0.3$ and $0.9$. The limit of DSPG, $x_{20000}$, is plotted along $y$-axis. We use different colors to represent different agents (blue for agent-$1$, orange for agent-$2$, green for agent-$3$ and red for agent-$4$). For example, at $p_c = 0.7$, agent-1 is at $-0.5 \exp(-18)$, agent-2 at $- 0.2 \exp(-18)$, agent-3 at $- 0.9 \exp(-18)$ and agent-4 is at $- 0.4 \exp(-18)$,
i.e., $x_{20000} = \left[-0.5 \exp(-18), - 0.2 \exp(-18), - 0.9 \exp(-18), - 0.4 \exp(-18) \right]$. As before, we plot the average over $20$ independent experiments. As compared to $p_c = 0.4$, the algorithm converges closer to the origin with $p_c = 0.9$.
%%%%%%%%%%%%%%%%%%%%%%%%%%%%%%%%%%%%%%%%%%%%%%%%%%%%%%%%%%%%%%%%%%%%%%%%%%%%%%%%  
   \begin{figure}[H]
      \centering
      \includegraphics[width=.5\textwidth]{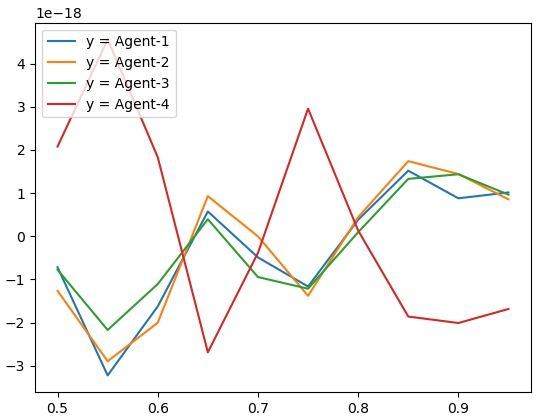}
      \caption{Average of $20$ experiments with $c = 0.1$. $p_c$ is plotted along $x$-axis and the limiting point is plotted along $y$-axis. Different colors are used for different agents.}
      \label{fig3}
   \end{figure}
%%%%%%%%%%%%%%%%%%%%%%%%%%%%%%%%%%%%%%%%%%%%%%%%%%%%%%%%%%%%%%%%%%%%%%%%%%%%%%%%
   \begin{figure}[H]
      \centering
      \includegraphics[width=.5\textwidth]{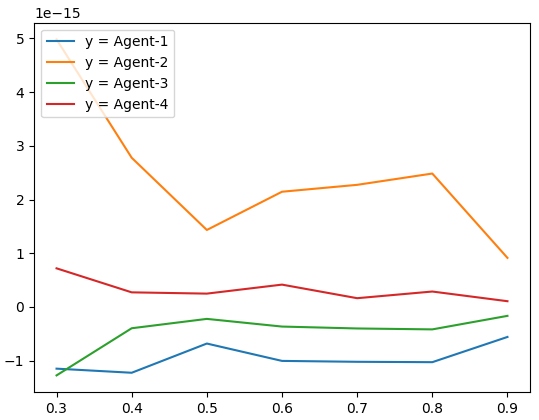}
      \caption{Same experimental setup as in Figure \ref{fig3} but with $c = 5$.}
      \label{fig4}
   \end{figure}
   For Figure \ref{fig4}, the experimental setup used in Figure \ref{fig3} is retained, with the exception that the sensitivity parameter $c$ is now set to $5$. As compared to $c=0.1$, DSPG converges farther from the origin when $c = 5$, for all values of $p_c$.
%%%%%%%%%%%%%%%%%%%%%%%%%%%%%%%%%%%%%%%%%%%%%%%%%%%%%%%%%%%%%%%%%%%%%%%%%%%%%%%% 
   \begin{figure}[H]
      \centering
      \includegraphics[width=.5\textwidth]{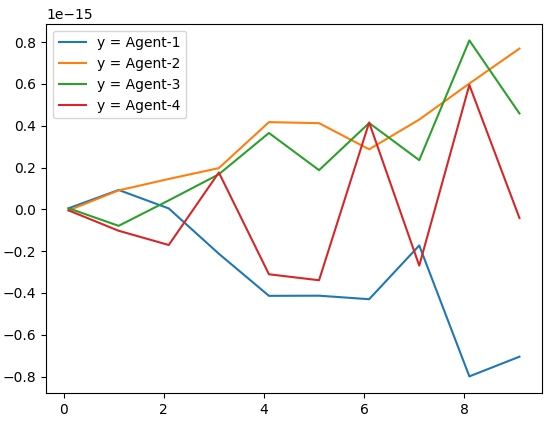}
      \caption{Average of $20$ experiments with $p_c = 0.3$. $c$ is plotted along $x$-axis and the limiting point is plotted along $y$-axis. Again, different colors are used for different agents.}
      \label{fig5}
   \end{figure}  
%%%%%%%%%%%%%%%%%%%%%%%%%%%%%%%%%%%%%%%%%%%%%%%%%%%%%%%%%%%%%%%%%%%%%%%%%%%%%%%%
   \begin{figure}[H]
      \centering
      \includegraphics[width=.5\textwidth]{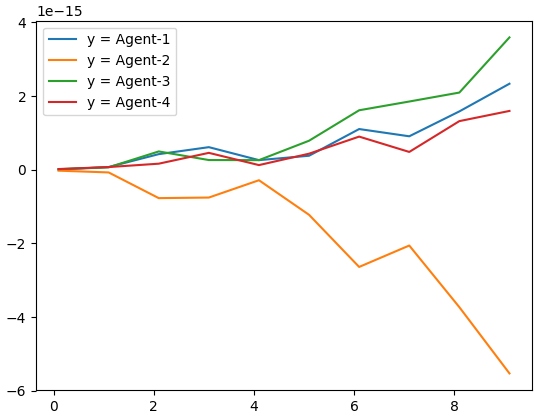}
      \caption{Same experimental setup as in Figure \ref{fig3} but with $p_c = 0.9$ and a {\bf constant step-size of 0.001}.}
      \label{fig6}
   \end{figure}
   Figures \ref{fig5} and \ref{fig6} illustrate the average performance of DSPG over $20$ experiments, for $p_c = 0.3$ and $0.9$, respectively. The sensitivity parameter $c$, plotted along $x$-axis, is varied between $0.1$ and $10$ in steps of $0.1$. As is expected, DSPG converges closer to the origin for smaller values of $c$. An important point to note is that the experiments illustrated in Figure \ref{fig6} were conducted using a constant step-size (learning rate) of $0.001$. While the analysis in Section \ref{sec_convergence} requires diminishing step-sizes, we observed that constant step-size algorithms converge, provided $c$ is sufficiently small. {\bf However, it must be noted that DSPG is not stable when using a constant learning rate}.
 \begin{remark}  
 Let us consider an implementation of DSPG using a constant learning rate.
 Recall from Section~\ref{sec_bias} that the gradient-estimation-bias is in $o(c^2 d)$. This appears as an additive error term in the descent direction. If we factor in the constant learning rate, say $\gamma$, then an error-term in $o(\gamma c^2 d)$ is added to the descent direction. This additive error can be controlled by choosing a smaller learning rate, however this slows the rate of convergence. Instead of using the learning rate to control the additive errors, it is efficient to control it through the use smaller values for the sensitivity parameter $c$. Since, this directly reduces the additive errors without affecting the rate of convergence.
 \end{remark}
   \subsection{Exp.2: $10$-agent system}
   Finally, we conducted similar experiments to understand the performance of DSPG in a $10$-agent setting. Results from these experiments are summarized in Figure \ref{fig2}. From the calculations in Section \ref{sec_bias} it is clear that higher variance is to be expected in a $10$-agent system as opposed to a $4$-agent one. This is certainly reflected in the plot wherein the worst performance is at $c = 8$ and $p_c = 0.7$. For very small values of $c$, the variance does remain low.
    %%%%%%%%%%%%%%%%%%%%%%%%%%%%%%%%%%%%%%%%%%%%%%%%%%%%%%%%%%%%%%%%%%%%%%%%%%%%%%%% 
   \begin{figure}[H]
      \centering
      \includegraphics[width=.5\textwidth]{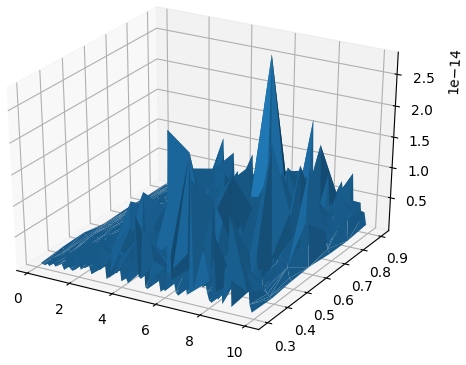}
      \caption{10-agent summary:  $0.1 \le c \le 10$ is plotted along $x$-axis, $0.3 \le p_c \le 0.9$ is plotted along $y$-axis and $\lVert (x_{20000}(1), \ldots, x_{20000}(10)) \rVert$ is plotted along  $z$-axis. Each point in the plot is obtained as an average of $10$ independent experiments.}
      \label{fig2}
   \end{figure}
   Let us quickly summarize the empirical results. DSPG converges to a neighborhood of the common minimizer. The size of this neighborhood is determined by the value of the sensitivity parameter $c$. With regards to variance, the sensitivity parameter has a greater influence than the success probability (of transmissions). Further, there is evidence that the variance is higher for larger size multi-agent systems. Finally, when implemented using constant learning rate DSPG still converges, however it is occasionally unstable.
   %To summarize, the use of simultaneous perturbations and constant sensitivity parameters leads to high variance. This can be countered using a small $c$. Although stability is not guaranteed when using constant learning rate, convergence is possible for sufficiently small $c$.
   \section{DSPG AND THE CONSENSUS PROBLEM}
   Consensus is a common problem occurring in multi-agent systems. The cumulative consensus problem in a d-agent system involves solving the following in a cooperative manner:
   \begin{equation}
   \label{cons_eq}
   \underset{x \in \mathbb{R}^d}{\text{argmin }} \sum \limits_{i=1}^d f_i(x),
   \end{equation}
   where $f_i: \mathbb{R}^d \to \mathbb{R}$ is only accessible to agent-i, and $x = (x(1), \ldots, x(d))$ such that $x(i)$ is a local control variable of agent-i. The following is a standard assumption in cummulative consensus problems: \\ \vspace*{0.02cm}\\
   \textbf{(A)} $f_i$ is a convex function for $1 \le i \le d$. Further, there is a unique $x^*$ which solves the cumulative consensus problem. \\ \vspace*{0.02cm}\\
   Ideally, agent-i wishes to update its estimate of $x^*(i)$ as follows:
   \[
   x_{n+1}(i) = x_n(i) - \gamma_n \sum \limits_{i=1}^d \nabla_{x(i)} f_j (x_n), \text{ where}
   \]
   $x_n = (x_n(1), \ldots, x_n(d))$, and $\nabla_{x(i)} f_j (\cdotp)$ is the partial derivative of $f_j$ taken with respect to $x(i)$, i.e., the $i^{th}$ partial derivative of $f_j$. There is, however, a two fold problem with such an update: (i) agent-i does not have direct access to $f_j$ or $x(j)$ for $j \neq i$, and (ii) it does not account for the presence of an unreliable wireless communication network. To overcome this, we propose that at any given time $n$, agent-j collects the latest possible estimates of $x^*(k)$ for $k \neq j$, and calculates $\hat{g}^j _n (i)$ for $1 \le i \le d$, where
   \begin{equation} \label{cons_eq1}
   \hat{g}^j _n (i) = \frac{f_j \left(
   \begin{matrix}
   x_{n - \tau_{1j}(n)}(1) + c \Delta_{n}^j(1) \\
   \vdots \\
   x_{n - \tau_{dj}(n)}(d) + c \Delta_{n}^j(d)
   \end{matrix}
   \right) - f_j \left(
   \begin{matrix}
   x_{n - \tau_{1j}(n)}(1) - c \Delta_{n}^j(1) \\
   \vdots \\
   x_{n - \tau_{dj}(n)}(d) - c \Delta_{n}^j (d)
   \end{matrix} \right)}{2c \Delta_n^j(i)}.
   \end{equation}
   Here, $\tau_{ij}(n)$ is the delay random variable associated with obtaining the estimate of $x^*(i)$ from agent-i. Note that $\hat{g}^j _n (i)$ is an approximation for $\nabla_{x(i)}f_j(x_{n - \tau_{1j}(n)}(1), \ldots, x_{n - \tau_{dj}(n)}(d)$. The delays are due to the presence of an unreliable wireless network. For the sake of simplicity, the reader may assume that every pair of agents is connected by a bidirectional wireless communication link. Although in reality, this information may have to be relayed over a multi-hop network. Once calculated, agent-j sends $\hat{g}^j _n (i)$ to agent-i. For its part, agent-i collects $\{\hat{g}^j _n (i)\}_{\underset{j \neq i}{1 \le j \le d}}$ and performs the following update:
   \begin{equation} \label{cons_update}
   x_{n+1}(i) = x_n(i) - \gamma_n \left( \sum \limits_{j = 1}^d \hat{g}^j _{n - \hat{\tau}_{ji}(n)}(i) \right), \text{ where}
   \end{equation}
$\hat{g}^j _{n - \hat{\tau}_{ji}(n)}(i)$ is obtained from agent-j and $\hat{\tau}_{ji}(n)$ is the delay random variable associated with obtaining the gradient approximation along the i$^{th}$ direction from agent-j. 

It is clear that \eqref{cons_update} is roughly in the form of the DSPG update. Except that, in the current problem, there is a two stage information exchange between the agents. One can prove, using arguments similar to those in Section~\ref{sec_convergence}, that \eqref{cons_update} converges to $x^* = \underset{x \in \mathbb{R}^d}{\text{argmin}} \sum \limits_{i=1}^d f_i(x)$, under the assumptions listed in Section~\ref{sec_asmp}, provided we modify $(A5)$ by replacing the requirement that  $\mathbb{E} \tau_{ij}(n) ^2 < \infty$ for $1 \le i,j \le d$ and $n \ge 0$, with $\mathbb{E}[ \hat{\tau}_{ji}(n) + \tau_{kj}(n - \hat{\tau}_{ji}(n)) ]^2 < \infty$ for $1 \le i,j,k \le d$ and $n \ge 0$.

%Unlike Algorithm 1, for the consensus problem there are two types of delay random variables, one associated with exchanging local estimates of the minimizer and the other associated with exchanging gradient approximations. Under conditions similar to those in Section~\ref{sec_asmp}, one can prove convergence.
  %%%%%%%%%%%%%%%%%%%%%%%%%%%%%%%%%%%%%%%%%%%%%%%%%%%%%%%%%%%%%%%%%%%%%%%%%%%%%%%%
\section{CONCLUSIONS AND FUTURE WORKS}
In this paper, we presented DSPG, a decentralized approximate descent method for distributed optimization problems. 
The gradient estimator used in DSPG is cross-entropy based and operates merely through sampling. The bias due to sampling is shown to be in the order of the sensitivity parameter $c$. The variance is affected by both the sensitivity parameter and the number of agents in the system. However, this variance can be controlled by choosing a small value for $c$ and ensuring frequent communication between agents. We analyzed the convergence behavior of DSPG and showed that it converges to a small neighborhood of the common minimum. Further, we showed that this neighborhood depends on $c$. We also presented empirical evidence in support of the aforementioned theories.  We conducted experiements for a constant learning rate implementation of DSPG. We observed that although the algorithm converged, it was occasionally unstable. Finally, we briefly discussed how the consensus problem can be solved used the ideas presented herein.

In the future it would be interesting to explore the stability issues of constant step-size DSPG. It would also be interesting to calculate the rate of convergence of DSPG.
%%%%%%%%%%%%%%%%%%%%%%%%%%%%%%%%%%%%%%%%%%%%%%%%%%%%%%%%%%%%%%%%%%%%%%%%%%%%%%%%
%\section{ACKNOWLEDGMENTS}

%%%%%%%%%%%%%%%%%%%%%%%%%%%%%%%%%%%%%%%%%%%%%%%%%%%%%%%%%%%%%%%%%%%%%%%%%%%%%%%%

\bibliographystyle{plain}
\bibliography{reference}

\begin{thebibliography}{10}

\bibitem{aubin2012differential}
J-P Aubin and Arrigo Cellina.
\newblock {\em Differential inclusions: set-valued maps and viability theory},
  volume 264.
\newblock Springer Science \& Business Media, 2012.

\bibitem{benaim1996dynamical}
Michel Benaim.
\newblock A dynamical system approach to stochastic approximations.
\newblock {\em SIAM Journal on Control and Optimization}, 34(2):437--472, 1996.

\bibitem{benaim1996asymptotic}
Michel Bena{\"\i}m and Morris~W Hirsch.
\newblock Asymptotic pseudotrajectories and chain recurrent flows, with
  applications.
\newblock {\em Journal of Dynamics and Differential Equations}, 8(1):141--176,
  1996.

\bibitem{benaim2005stochastic}
Michel Bena{\"\i}m, Josef Hofbauer, and Sylvain Sorin.
\newblock Stochastic approximations and differential inclusions.
\newblock {\em SIAM Journal on Control and Optimization}, 44(1):328--348, 2005.

\bibitem{borkar1998asynchronous}
Vivek~S Borkar.
\newblock Asynchronous stochastic approximations.
\newblock {\em SIAM Journal on Control and Optimization}, 36(3):840--851, 1998.

\bibitem{borkar2000ode}
Vivek~S Borkar and Sean~P Meyn.
\newblock The ode method for convergence of stochastic approximation and
  reinforcement learning.
\newblock {\em SIAM Journal on Control and Optimization}, 38(2):447--469, 2000.

\bibitem{folland2005higher}
GB~Folland.
\newblock Higher-order derivatives and taylor’s formula in several variables.
\newblock {\em Preprint}, pages 1--4, 2005.

\bibitem{kiefer1952stochastic}
Jack Kiefer, Jacob Wolfowitz, et~al.
\newblock Stochastic estimation of the maximum of a regression function.
\newblock {\em The Annals of Mathematical Statistics}, 23(3):462--466, 1952.

\bibitem{nedic2018distributed}
Angelia Nedi{\'c} and Ji~Liu.
\newblock Distributed optimization for control.
\newblock {\em Annual Review of Control, Robotics, and Autonomous Systems},
  1:77--103, 2018.

\bibitem{nedic2009distributed}
Angelia Nedic and Asuman Ozdaglar.
\newblock Distributed subgradient methods for multi-agent optimization.
\newblock {\em IEEE Transactions on Automatic Control}, 54(1):48, 2009.

\bibitem{prashanth2017adaptive}
LA~Prashanth, Shalabh Bhatnagar, Michael Fu, and Steve Marcus.
\newblock Adaptive system optimization using random directions stochastic
  approximation.
\newblock {\em IEEE Transactions on Automatic Control}, 62(5):2223--2238, 2017.

\bibitem{ramaswamy2016generalization}
Arunselvan Ramaswamy and Shalabh Bhatnagar.
\newblock A generalization of the borkar-meyn theorem for stochastic recursive
  inclusions.
\newblock {\em Mathematics of Operations Research}, 42(3):648--661, 2016.

\bibitem{ramaswamy2018analysis}
Arunselvan Ramaswamy and Shalabh Bhatnagar.
\newblock Analysis of gradient descent methods with nondiminishing bounded
  errors.
\newblock {\em IEEE Transactions on Automatic Control}, 63(5):1465--1471, 2018.

\bibitem{ramaswamy2018stability}
Arunselvan Ramaswamy and Shalabh Bhatnagar.
\newblock Stability of stochastic approximations with “controlled markov”
  noise and temporal difference learning.
\newblock {\em IEEE Transactions on Automatic Control}, 64(6):2614--2620, 2018.

\bibitem{ramaswamy2018asynchronous}
Arunselvan Ramaswamy, Shalabh Bhatnagar, and Daniel~E Quevedo.
\newblock Asynchronous stochastic approximations with asymptotically biased
  errors and deep multi-agent learning.
\newblock {\em arXiv preprint arXiv:1802.07935}, 2018.

\bibitem{robbins1951stochastic}
Herbert Robbins and Sutton Monro.
\newblock A stochastic approximation method.
\newblock {\em The annals of mathematical statistics}, pages 400--407, 1951.

\bibitem{sirb2018decentralized}
Benjamin Sirb and Xiaojing Ye.
\newblock Decentralized consensus algorithm with delayed and stochastic
  gradients.
\newblock {\em SIAM Journal on Optimization}, 28(2):1232--1254, 2018.

\bibitem{spall1992multivariate}
James~C Spall et~al.
\newblock Multivariate stochastic approximation using a simultaneous
  perturbation gradient approximation.
\newblock {\em IEEE transactions on automatic control}, 37(3):332--341, 1992.

\end{thebibliography}
\end{document}